\theoremstyle{plain}
\newtheorem*{theorem*}{Theorem}
\newtheorem{theorem}{Theorem}[section]
\newtheorem{definition}{Definition}[section]
\newtheorem{lemma}[theorem]{Lemma}
\newtheorem{proposition}[theorem]{Proposition}
\newtheorem*{claim*}{Claim}
\newtheorem{question}[theorem]{Question}
\theoremstyle{remark}
\newcommand{\norm}[1]{\left\lVert#1\right\rVert}
\def\VV{\mathbb{V}}
\DeclareMathOperator\Tourn{Tourn}
\DeclareMathOperator\guar{guar}
\DeclareMathOperator\outdeg{outdeg}
\DeclareMathOperator\rand{rand}
\let\eps\varepsilon
\let\originalleft\left
\let\originalright\right
\renewcommand{\left}{\mathopen{}\mathclose\bgroup\originalleft}
\renewcommand{\right}{\aftergroup\egroup\originalright}
\def\imod#1{\allowbreak\mkern10mu({\operator@font mod}\,\,#1)}
\title{The performance guarantee of randomized perfect voting trees}
\author{Jason Long} \address{Department of Pure Mathematics and
Mathematical Statistics, University of Cambridge, Wilberforce Road,
Cambridge CB3\thinspace0WB, UK} \email{jl694@cam.ac.uk}
\author{Adam Zsolt Wagner} \address{Department of Mathematics, ETH, R\"amistrasse 101, 8092 Z\"urich, Switzerland }
\email{zsolt.wagner@math.ethz.ch}
\begin{document}

\begin{abstract} In this note we study randomized voting trees, previously
introduced by Fisher, Procaccia and Samorodnitsky~\cite{FPS}. They speculate that a
non-trivial performance guarantee may be achievable using randomized,
balanced trees whose height is carefully chosen. We explore some connections to the so-called Volterra 
quadratic stochastic operators, and show that uniformly random voting
trees cannot provide a performance guarantee that is linear in the number of individuals.
\end{abstract} 
\maketitle 
\section{Introduction} 
A
well-known question in social choice theory is the following: given a
collection of $n$ candidates which are each pairwise comparable, how should
we select a winner? The pairwise comparisons between candidates may be
encoded as a tournament of $n$ vertices, with edges directed from the
winner to the loser in each comparison. The subtlety of the problem lies in
the fact that the tournament need not be transitive, and therefore there is
no indisputable way to select a global winner. In order to evaluate the candidates, some scoring system must be chosen which assigns scores to the candidates in a manner which may depend on the tournament. A common approach is to compute their \emph{Copeland score}, which is simply the
number of other candidates that they beat, or the out-degree of the vertex
in the tournament.

One natural way of selecting a winning candidate from a tournament is to
use a \emph{voting tree}. Voting trees were first introduced by Farquharson~\cite{farq} and extensively studied in e.g.~\cite{vr1,vr2,vr3,vr4,vr5,vr6,vr7,vr8}. A voting tree is a complete binary tree with
leaves labelled from $[n]$ (with possible repeats). Given any tournament as input, a voting tree deterministically selects a winner from
the tournament by running pairwise elections between the leaves until we
arrive at the root node. Let us  denote by $\Tourn(n)$ the set of all tournaments on $n$ labelled vertices. Hence voting trees can be regarded as functions from $\Tourn(n)$ to $[n]$, and we write $\Theta(T)$ for the winner of the tournament $T$ under the voting tree $\Theta$. Using the Copeland score to evaluate the candidates, it makes sense to define the \emph{performance guarantee} $\guar(\Theta)$ of a voting tree $\Theta$, which is the minimum out-degree of any winner
that it produces. That is,
$$\guar(\Theta) = \min\{\outdeg\left(\Theta(T)\right):T\in\Tourn(n)\},$$
where $\outdeg(\Theta(T))$ denotes the number of candidates in $T$ that are beaten by $\Theta(T)$.

Recent work has focussed on determining the largest possible performance
guarantee for a voting tree. The trivial upper bound is $\lfloor
(n-1)/2\rfloor$, which would be achieved if the voting tree returned the vertex of maximal out-degree for any input tournament. For a lower bound, we might first consider the voting tree
of depth $\log_2(n)$, with $n$ a power of 2, and leaves labelled from $1$
to $n$ in any order. It is easy to see that a winner produced by such a tree has
out-degree at least $\log_2(n)$, since precisely this many elections are
won against distinct opponents on the way to the root node, but it is also
possible to construct tournaments for which the winner has out-degree
precisely $\log_2(n)$ for this voting tree. Hence the performance guarantee of this balanced tree is precisely $\log_2 (n)$.

Both of these bounds have been recently improved. In 2011, Fischer,
Procaccia and Samorodnitsky~\cite{FPS} showed that no voting tree can guarantee an
out-degree larger than $3/4+o(1)$ times the maximum out-degree in any input
tournament, improving the trivial upper bound of $n/2$ to $3n/8+o(n)$. Then in 2012, Iglesias,
Ince and Loh~\cite{poshen} achieved a significant breakthrough with a construction of a voting tree achieving a guaranteed
out-degree of $(\sqrt{2}+o(1))\sqrt{n}$. Their explicit construction is based on an ingenious recursion.

Fisher, Procaccia and Samorodnitsky also consider the problem of
determining the performance of a randomized voting tree (where the depth is
fixed but the labels for the leaves are chosen according to some
distribution $\Delta$). 
They study the specific distribution obtained by uniformly
labelling the vertices of a complete balanced voting tree of depth $d$ with labels from $[n]$. Such a voting tree is
called a $d$-RPT (random perfect tree) and will be denoted by $\Theta^{\rand}_{d,n}$. It is natural to think that for a fixed $n$,
choosing a $d$-RPT with large $d$ may be a good choice of voting tree, since
the winner will have won many elections and the collection of opponents
that the winner meets is, at least in the early matches, fairly random.
They show, however, that this intuition is flawed, since for any  fixed $n$ and any small $\eps>0$
there exist arbitrarily large depths $d$ for which the guarantee of the  $d$-RPT is $1$ with probability at least $1-\eps$. Despite this, they speculate that
it may be the case that, for a given $n$, one can select arbitrarily large depths $d=d(n)$
for which the $d$-RPT can provide some sort of approximation guarantee.

\begin{question}\label{main} Let $n\in\mathbb{N}$
and let $\epsilon>0$. What is the largest out-degree that we can obtain
with probability at least $1-\epsilon$ for a $d$-RPT  with a suitable choice of
$d=d(n,\epsilon)$? \end{question}

An out degree of $f(n)$ in Question~\ref{main}, say,
would prove the existence of a deterministic voting tree with an
approximation guarantee of $f(n)$, simply by taking
$\epsilon$ sufficiently small.

In this note, we explore this question in more detail, uncovering links
to some well-studied dynamical systems. By using results on the so-called
\emph{Volterra systems} we show that the $d$-RPT cannot be used to give a
linear out-degree for all $n$.

\begin{theorem}\label{theorem} For any $\eps>0$ there exist infinitely many $n$ with the property that
$$\limsup_{d\rightarrow\infty}\mathbb{P}\left(\guar(\Theta^{\rand}_{d,n})>\eps n\right)\leq \eps.$$
 \end{theorem}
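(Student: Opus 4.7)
My first step would be to recast the problem as a statement about a Volterra quadratic stochastic operator. For a fixed tournament $T$ on $[n]$ with skew-symmetric adjacency matrix $a_{ij}\in\{-1,0,1\}$ ($a_{ij}=1$ iff $i\to j$ in $T$), let $p^{(d)}_i$ denote the probability that $i$ is the winner of the $d$-RPT run on $T$. A $(d+1)$-RPT is built from two independent $d$-RPTs, so by conditioning on their (i.i.d.) winners one obtains $p^{(0)}=(1/n,\dots,1/n)$ and the recursion
\[
p^{(d+1)}_i \;=\; p^{(d)}_i \Bigl(1 + \sum_{j} a_{ij}\,p^{(d)}_j\Bigr),
\]
which is exactly the Volterra quadratic stochastic operator $V_T$. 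Thus $p^{(d)}=V_T^d(u)$ with $u=(1/n,\dots,1/n)$.

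Using any fixed ``test'' tournament $T^*$ on $[n]$ we have
\[
\mathbb{P}\bigl(\guar(\Theta^{\rand}_{d,n})>\eps n\bigr) \;\le\; \mathbb{P}\bigl(\outdeg(\Theta^{\rand}_{d,n}(T^*))>\eps n\bigr) \;=\; \sum_{i:\,\outdeg_{T^*}(i)>\eps n} p^{(d)}_i,
\]
so the theorem reduces to the following claim: for every $\eps>0$ there are infinitely many $n$ for which one can choose $T^*=T^*_{n,\eps}$ on $n$ vertices satisfying $\limsup_{d\to\infty}\sum_{i:\,\outdeg_{T^*}(i)>\eps n}(V_{T^*}^d u)_i\le\eps$. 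To exploit the symmetry of the uniform starting point, I would look for $T^*$ as a blow-up of a small base tournament $T_0$: replace each vertex $v$ of $T_0$ by a class $V_v$ of size $n_v$, orient all cross-class edges according to $T_0$, and place a vertex-transitive (in particular regular) tournament inside each class. This symmetry is preserved by $V_{T^*}$, so starting from the uniform distribution the only degree of freedom is the vector of class masses $P_v^{(d)}=\sum_{i\in V_v}p^{(d)}_i$, which itself evolves according to the Volterra operator $V_{T_0}$ with the starting vector $(n_v/n)_{v\in V(T_0)}$. In this way a question about a Volterra system on $n$ vertices is reduced to a question about the much smaller, well-studied Volterra system on $T_0$, for which I would then invoke the classical structure theorems (following Zakharevich, Ganikhodjaev and others) describing its $\omega$-limit set on the simplex.

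The heart of the argument, and the step I expect to be the main obstacle, is choosing $T_0$ and the class sizes $n_v$ so that the asymptotic mass of the reduced orbit lies on classes $V_v$ whose vertices all have out-degree at most $\eps n$ in $T^*$. There is a genuine tension here: for a class $V_v$ to attract mass in the reduced Volterra dynamics, the vertex $v$ must dominate its neighbors in $T_0$, but this very property pushes the out-degrees of the vertices in $V_v$ up, so a naive ``convergence to an interior Nash equilibrium'' approach cannot succeed (one can check, using skew-symmetry of $A$, that the expected out-degree under any Nash fixed point is at least $(n-1)/2$). This forces one to work with base tournaments $T_0$ whose $\omega$-limits are not ordinary stable fixed points, but heteroclinic structures on the boundary of the simplex, and to match the sizes $n_v$ to these structures carefully enough that the time-averaged behavior of the heteroclinic orbit in the blow-up deposits $1-\eps$ of its mass on classes of low out-degree. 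Carrying this out rigorously, via the cited Volterra theory, is what produces the required sequence of $n$'s.
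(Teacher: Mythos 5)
Your opening reduction is the same as the paper's: the $d$-RPT winner distribution on a fixed tournament evolves by the Volterra operator $V_T$ from the uniform point, and the blow-up construction (classes $V_v$ whose masses evolve by $V_{T_0}$, thanks to skew-symmetry inside each class) is exactly the paper's ``tripartite tournament'' device with $T_0$ the $3$-cycle, so that the class masses follow the Stein--Ulam spiral $\VV$ on the $2$-simplex. Up to that point the plan is sound.

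The gap is in the step you yourself flag as the main obstacle, and your chosen reduction makes it unresolvable. You bound $\mathbb{P}(\guar(\Theta^{\rand}_{d,n})>\eps n)$ by the bad-mass of a \emph{single} test tournament $T^*$ fixed once and for all, and then need $\limsup_{d\to\infty}\sum_{i:\outdeg(i)>\eps n}(V_{T^*}^d u)_i\le\eps$. But for any blow-up of a base tournament with a heteroclinic $\omega$-limit (in particular the $3$-cycle), the orbit of the class masses visits \emph{every} corner of the cycle, spending longer and longer consecutive stretches of time concentrated near each one; during the stretches when the mass sits on the large class (whose vertices have out-degree roughly $n/2$), the bad-mass is close to $1$. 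Hence the $\limsup$ over $d$ is close to $1$, not $\eps$, and knowing the $\omega$-limit set (Zakharevich et al.) cannot help, because the problem is about \emph{which times} the orbit is where, not which points it accumulates at. The paper escapes this precisely because $\guar$ is a minimum over tournaments, so the test tournament may depend on $d$: its Lemma 3.6 produces six starting points (rotations of one point near a corner together with suitable forward translates along their orbits), i.e.\ six tournaments, such that for \emph{every} large $d$ at least one of the six orbits is $\eps$-close to the designated corner at time exactly $d$. Establishing that covering property requires quantitative control of the spiral near corners and sides (Propositions 3.2--3.5: potential decay on $M(\eps)$, bounded time to reach a corner, slow escape from corners, and the impossibility of skipping a corner quickly), none of which your plan supplies or replaces. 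Without either this multi-tournament covering argument or a genuinely new mechanism forcing a single orbit to avoid the high-degree classes at all large times, the proposal does not yield the theorem.
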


The above theorem essentially says that if $n$ is constant then the random voting tree $\Theta^{\rand}_{d,n}$ has sublinear guarantee for all sufficiently large $d$. It is a natural question to ask what the relation between $\eps$ and the smallest allowed choice for $n$ is in the statement of the theorem. We note that by examining the proof and doing all calculations very carefully we could show that this least $n$ is bounded above by $\exp\left(\exp\left(\ldots\exp\left(\eps^{-1}\right)\ldots\right)\right)$ where there are seven ``$\exp$''-s. In particular this means that for infinitely many $n$ we may take 
$$\eps=\frac{1}{\underbrace{\log(\log(\ldots\log}_{7\text{-fold iterated }\log}   (n)\ldots))}.$$

Since these calculations are quite tedious and the $7$-fold logarithm is likely far from optimal, we present the proof of Theorem~\ref{theorem} without providing bounds on $n$ and hence $\eps$. We are not sure what the correct behaviour of $\eps$ should be -- in fact we cannot even answer the following question:

\begin{question}
Is the following statement true or false? If $n$ is an arbitrary fixed integer then as $d\rightarrow \infty$ we have
$$\mathbb{P}\left(\guar(\Theta^{\rand}_{d,n})\leq10\right)\rightarrow 1.$$
\end{question}

Hence, this theorem does not eliminate the interest in
Question~\ref{main}. Any positive result giving out-degree larger than
$\mathcal{O}(\sqrt{n})$ would improve on the result of Iglesias, Ince and
Loh, and we leave a large gap between this and our upper bound. Progress
with such an approach, however, appears likely to require significant
advances in our understanding of the relevant dynamical systems.

Our proof of Theorem~\ref{theorem} is based on the connections between the $d$-RPT and the Stein-Ulam Spiral which were established in~\cite{FPS}. Our proof makes heavy use of the extensive literature on this dynamical system, in particular the results of~\cite{misi, zakhar}.

\section{The Volterra QSO}

We begin our analysis of the $d$-RPT by highlighting a connection to a
dynamical system, following~\cite{FPS}.

Let our candidates be elements of $[n]$ for some fixed $n$, and let our
voting tree $\Theta$ be a $d$-RPT, that is, a binary tree with height $d$ and $2^d$ leaves, each leaf receiving a label from $[n]$ uniformly at random independently from other leaves. Fix some tournament $T\in\Tourn (n)$ and let $P_i(k)$ denote the probability that a vertex
at depth $k$ gets labelled with candidate $i$ when $\Theta$ is applied to the tournament $T$. At depth $d$, the vertices of
our voting tree are labelled uniformly at random, and so we have
$P_i(d)=1/n$ for all $i$. As we move up the voting
tree these probabilities evolve according to a quadratic dynamical system.

In particular, given a candidate $i$ which is beaten by candidates in the
set $A\subset [n]$ and which beats the set $B\subset [n]$, we see that a
given vertex $v$ at depth $k-1$ in our voting tree is labelled $i$ if and
only if one of the children of $v$ is labelled $i$ and the other child is
not labelled from $A$. This gives that
\[P_i(k-1)=P_i(k)\left(P_i(k)+2\sum_{j\in B}P_j(k)\right)\]
\[=P_i(k)\left(1-\sum_{j\in A}P_j(k)+\sum_{j\in B}P_j(k)\right).\] This
quadratic stochastic operator is precisely an instance of the Volterra Quadratic Stochastic Operator (QSO).
\begin{definition} Fix $n$ and let $\mathbf{x}(0)=(1/n,\dots,1/n)$ be an $n$-tuple.
For a tournament $T$ and an $n$-tuple $\mathbf{x}=(x_1,\dots,x_n)$ we define
$V_T(\mathbf{x})$ to be \[x_i\left(1-\sum_{j\in A}x_j+\sum_{j\in B}x_j\right)\]
where $A$ and $B$ are as above. Let $\mathbf{x}(t+1)=V_T(\mathbf{x}(t))$. \end{definition}
Observe that $\mathbf{x}(d-1)$ is the probability distribution of the winning vertex
when tournament $T$ is fed into the $d$-RPT voting tree.

The behaviour of the QSO $V_T$ can be complex even when $T$ is simple, see e.g.~\cite{gani3, gani2, sabu1}. We will prove Theorem~\ref{theorem} by studying the simplest non-trivial QSO $V_T$ where $T$ is the 3-vertex cyclic tournament;
candidate 1 is beaten by candidate 2, who is beaten by candidate 3, who in
turn is beaten by candidate 1. Let us refer to this QSO, also called the Stein-Ulam Spiral (see~\cite{misi}), simply as $\VV$. It is given by the recursion
\begin{align*}
\VV(x,y,z) &=(x^2+2xy,   y^2+2yz, z^2+2zx) \\
&= (x(1+y-z), y(1+z-x), z(1+x-y)).
\end{align*}

\section{Properties of the system $\VV$}

The Stein-Ulam Spiral was first studied by Stan Ulam and Paul Stein in the 1950's, using the computers in the Los Alamos
National Laboratory~\cite{alamos}. Due to its non-typical behaviour this system and its long-term behaviour has received considerable attention throughout the years and has been extensively studied, see e.g.~\cite{misi,FPS,gani1, biobook, alamos,zakhar}. 

Define the $2$-simplex $\Delta$ as $\Delta = \{(x,y,z)\in\mathbb{R}^3:0\leq x,y,z\leq 1, x+y+z=1\}$. Recall that the map $\VV:\Delta\rightarrow \Delta$ is defined as follows:
$$\VV(x,y,z)=\left(x(1+y-z),y(1+z-x), z(1+x-y)\right).$$

$\VV$ is a bijective, invertible map from the 2-simplex to itself. It has fixed points at the
three vertices $(1,0,0)$,$(0,1,0)$,$(0,0,1)$ and at the point $(1/3,1/3, 1/3)$. Any point $\mathbf{x}$ not equal
to $(1/3,1/3, 1/3)$ in the interior of the simplex has a limit set which
contains all three corners. Moreover, the orbit of the point $\mathbf{x}$ under
$\VV$ moves round the corners of the simplex in a cyclic fashion, spending
(for all $\epsilon>0$) a proportion of more than $1-\epsilon$ of all steps
within a distance $\epsilon$ of each corner.  Defining the potential function $\phi:\Delta\rightarrow \mathbb{R}$ by $\phi((x,y,z))=xyz$ it is easy to show that   $\phi(\mathbf{a})\geq \phi(\VV(\mathbf{a}))$ for all $\mathbf{a}\in\Delta$. Figure~\ref{alamospicture} illustrates the behaviour of the system. It is taken from the original Los Alamos report~\cite{alamos}.

\begin{figure}[h!]
\centering
\includegraphics[width=0.8\textwidth]{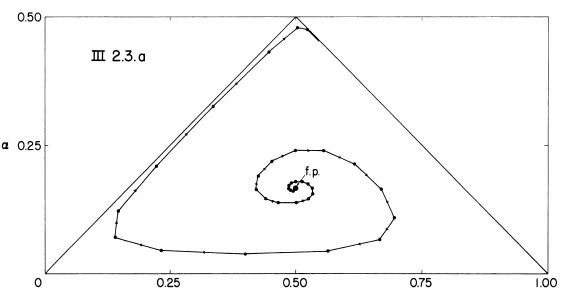}
\caption{The trajectory of a point under $\VV$}
\label{alamospicture}
\end{figure}

It is apparent from Figure~\ref{alamospicture} that the potential $\phi(\mathbf{x})$ decreases rapidly in successive iterations of $\VV$, hence any point gets very close to the boundary of $\Delta$ quickly. This means that one cannot afford even the smallest rounding errors when calculating values of $\VV^i(\mathbf{x})$, making the analysis of the long-term behaviour of $\VV$ via computer simulation difficult. The following basic observation will be used several times throughout this paper.
\begin{proposition}\label{easybounds}
Let $\mathbf{a}$ be a point in $\rm{int}(\Delta)$ with coordinates $(x,y,z)$ and let $\VV(\mathbf{a})$ have coordinates $(x',y',z')$. Then the following inequalities hold:
\begin{itemize}
\item $x'\leq 2x$ and $(1-x)^2\leq 1-x'$,
\item $y'\leq 2y$ and $(1-y)^2\leq 1-y'$,
\item $z'\leq 2z$ and $(1-z)^2\leq 1-z'$.
\end{itemize}
\end{proposition}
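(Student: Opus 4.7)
The plan is to prove the three pairs of inequalities by direct computation, exploiting the constraint $x+y+z=1$; the cyclic symmetry of $\VV$ means that once the first pair is established the other two follow verbatim. No deep input is needed — the statement is really a bookkeeping identity plus two trivial bounds.

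For the first bound $x' \leq 2x$, I would simply note that $x' = x(1+y-z)$ and that since $0\leq z$ and $y\leq x+y+z = 1$, the factor $1+y-z$ lies in $[0,2]$; thus $x' \leq 2\cdot x$. The inequalities $y'\leq 2y$ and $z'\leq 2z$ follow by cycling the roles of the coordinates.

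The second bound is the substantive one, and the key step I would perform is to expand $1-x'$ using $x+y+z=1$. Writing $y = 1-x-z$, one gets
\[
1 - x' = 1 - x\bigl(1+y-z\bigr) = 1 - x(2-x-2z) = 1 - 2x + x^2 + 2xz = (1-x)^2 + 2xz.
\]
Since $x,z\geq 0$, this yields $(1-x)^2 \leq 1-x'$ immediately (with strict inequality in the interior). Permuting cyclically gives $1 - y' = (1-y)^2 + 2xy$ and $1 - z' = (1-z)^2 + 2yz$, which furnish the remaining two inequalities.

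There is no real obstacle: the only ``trick'' is recognising that $(1-x)^2$ appears naturally after substituting $y+z = 1-x$ into $1-x(1+y-z)$. Because the proof is so short I would present it in three lines rather than separating cases. If it were desirable to emphasise the sharper identity (rather than just the inequality), I would state $1-x' = (1-x)^2 + 2xz$ explicitly, since this strengthening may well be useful in later estimates involving $\phi(\VV(\mathbf{a}))=x'y'z'$.
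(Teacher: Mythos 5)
Your proof is correct and follows essentially the same elementary route as the paper, which bounds $x'=x(1+y-z)\leq x(1+y)\leq x(2-x)=1-(1-x)^2$. Your exact identity $1-x'=(1-x)^2+2xz$ is a slight sharpening of the paper's inequality chain, but the substance is identical.
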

\begin{proof}
By symmetry it suffices to show the two inequalities in the first line. Note that $x'=x(1+y-z)\leq x(1+y)\leq 2x$, proving the first inequality. For the second inequality, we have
$$x'=x(1+y-z)\leq x(1+y)\leq x(1+(1-x))=1-(x-1)^2.$$
\end{proof}

Given a point $\mathbf{a}$ in the $2$-simplex $\Delta$, define its \emph{orbit} $\mathcal{O}(\mathbf{a})$ as $$\mathcal{O}(\mathbf{a}):=\{\VV^i(\mathbf{a}):i\in\mathbb{Z}\}.$$ If $\mathbf{a}$ has coordinates $(x,y,z)$ then define its \emph{rotation} as $R(\mathbf{a})=(y,z,x)$. Two points $\mathbf{a},\mathbf{b}\in \Delta$ are \emph{rotated} if either $R(\mathbf{a})=\mathbf{b}$ or $R(\mathbf{b})=\mathbf{a}$. Two points $\mathbf{a},\mathbf{b}\in \Delta$ are \emph{weakly rotated} if there exists $i\in\mathbb{Z}$ such that $\VV^i(\mathbf{a})$ and $\mathbf{b}$ are rotated.  If $\mathbf{a},\mathbf{b}$ are weakly rotated define their \emph{rotation distance} $\rm{rtd}(\mathbf{a},\mathbf{b})$ as $|i|$, where $i\in\mathbf{Z}$ is minimal such that $\VV^i(\mathbf{a})$ and $\mathbf{b}$ are rotated.

Note that if $\mathbf{a},\mathbf{b}$ are weakly rotated then $R\left(\mathcal{O}(\mathbf{a})\right)=\mathcal{O}(\mathbf{b})$ or $R\left(\mathcal{O}(\mathbf{b})\right)=\mathcal{O}(\mathbf{a})$. 
For any $\eps>0$ define the set $M(\eps)=\left\{(x,y,z)\in\Delta: \norm{ \bigl(x,y,z\bigr) - \bigl(\frac13,\frac13,\frac13\bigr)    }_{\infty}\geq \eps\right\}$, where $\norm{\cdot}_{\infty}$ denotes the sup norm on $\mathbb{R}^2$. Say that a point $\mathbf{a}\in\Delta$ \emph{is $\eps$-close to the $x$ corner} if $\norm{\mathbf{a}-(1,0,0)}_{\infty}\leq \eps$. The definitions for being $\eps$-close to the $y$ or $z$ corner are analogous. We say that a point $\mathbf{a}\in\Delta$ is $\eps$-close to a vertex of $\Delta$ if $\mathbf{a}$ is $\eps$-close to either the $x$, $y$ or $z$ corner. 

Similarly, say that a point is \emph{$\eps$-close to the $xy$ side} if its $z$ coordinate is at most $\eps$, and the definitions for being $\eps$-close to the $yz$ and $xz$ sides are analogous.
To prove our main result we will need the following four propositions. Their proofs are straightforward but somewhat technical, so we postpone the details until the next section.
\begin{proposition}\label{99decr}
For all $\eps>0$ sufficiently small, for any $\mathbf{a}\in M(\eps)$ we have $\phi(\VV(\mathbf{a}))\leq (1-\eps^{3})\phi(\mathbf{a})$.
\end{proposition}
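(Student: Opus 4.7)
The plan is to expand $\phi(\VV(\mathbf{a}))$ directly and exploit an algebraic identity. By the formula for $\VV$, we have $\phi(\VV(\mathbf{a})) = xyz\cdot(1+y-z)(1+z-x)(1+x-y)$, so whenever $\phi(\mathbf{a}) > 0$ we need only bound the factor $(1+u)(1+v)(1+w)$, where $u=y-z$, $v=z-x$, $w=x-y$. (The boundary case $\phi(\mathbf{a})=0$ is automatic, since $\VV$ preserves the boundary of $\Delta$.) The key observation is that $u+v+w=0$; expanding the product and using $(u+v+w)^2=0$ to write $uv+vw+wu=-\tfrac{1}{2}(u^2+v^2+w^2)$, one obtains
\[
(1+u)(1+v)(1+w) \;=\; 1 - \tfrac{1}{2}(u^2+v^2+w^2) + uvw.
\]

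Second, I would show that the hypothesis $\mathbf{a}\in M(\eps)$ forces $M:=\max(|u|,|v|,|w|)\geq 3\eps/2$. Indeed, by symmetry we may assume $x\geq 1/3+\eps$ (the case $x\leq 1/3-\eps$ is analogous, and the other coordinates are handled by permuting roles). Then $y+z=1-x\leq 2/3-\eps$, so at least one of $y,z$ is at most $1/3-\eps/2$, and hence one of $|x-y|,|x-z|$ is at least $3\eps/2$.

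Third, WLOG $|u|=M$, and use $v+w=-u$. The inequality $v^2+w^2\geq \tfrac{1}{2}(v+w)^2$ (power-mean / Cauchy--Schwarz) gives $v^2+w^2\geq \tfrac{1}{2}M^2$, while $|vw|\leq \tfrac{1}{4}(v+w)^2=\tfrac{1}{4}M^2$ by AM--GM, so $|uvw|\leq \tfrac{1}{4}M^3$. Therefore
\[
\tfrac{1}{2}(u^2+v^2+w^2) - uvw \;\geq\; \tfrac{3}{4}M^2 - \tfrac{1}{4}M^3 \;=\; \tfrac{M^2}{4}(3-M) \;\geq\; \tfrac{M^2}{2},
\]
since $M\leq 1$. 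Substituting $M\geq 3\eps/2$ yields $(1+u)(1+v)(1+w)\leq 1-9\eps^2/8$, which is at most $1-\eps^3$ once $\eps$ is sufficiently small.

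There is no serious obstacle; the entire argument is an algebraic manipulation enabled by the constraint $u+v+w=0$, together with a short geometric observation that membership in $M(\eps)$ forces some pairwise gap to be $\Omega(\eps)$. It is worth noting that the bound obtained is in fact $1-\Theta(\eps^2)$, considerably stronger than the stated $1-\eps^3$; presumably the weaker form is chosen because $\eps^3$ is convenient when iterating this inequality later.
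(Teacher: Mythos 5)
Your proof is correct and takes a genuinely different, more self-contained route than the paper. The paper also reduces to bounding $(1+y-z)(1+z-x)(1+x-y)$, but then sets $a=1+y-z$, etc., notes $a+b+c=3$, and invokes a quantitative strengthening of the AM--GM inequality from a cited reference to force $\sqrt[3]{abc}\leq 1-\Omega(\eps^2)$. You instead expand the product directly via the identity $(1+u)(1+v)(1+w)=1-\tfrac12(u^2+v^2+w^2)+uvw$ for $u+v+w=0$, which makes the whole argument elementary and gives the same qualitative conclusion (both proofs actually yield $1-\Theta(\eps^2)$, comfortably stronger than the stated $1-\eps^3$). One step needs a word more of justification: the inequality $|vw|\leq\tfrac14(v+w)^2$ is \emph{false} for general reals (take $v=-w\neq0$), so ``by AM--GM'' does not suffice as written. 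It does hold in your situation, but only because $|u|=M$ is the maximum: from $|v|,|w|\leq|u|=|v+w|$ one deduces that $v$ and $w$ cannot have strictly opposite signs, whence $vw\geq0$ and $vw=|v||w|\leq\bigl(\tfrac{|v|+|w|}{2}\bigr)^2=\tfrac14(v+w)^2$. (Alternatively, you only need the one-sided bound $uvw\leq\tfrac14M^3$, and the case $uvw\leq0$ is trivial.) With that observation inserted, the chain $\tfrac12(u^2+v^2+w^2)-uvw\geq\tfrac34M^2-\tfrac14M^3\geq\tfrac12M^2$ and the lower bound $M\geq 3\eps/2$ from membership in $M(\eps)$ complete the proof as you describe.
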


\begin{proposition}\label{epsclosecompact}
For all $\eps >0$ there exists a constant $D=D(\eps)$ such that for all $\mathbf{a}\in M(\eps)$ there is an integer $f(\mathbf{a})$ with $0\leq f(\mathbf{a})\leq D$ so that $\VV^{f(\mathbf{a})}(\mathbf{a})$ is $\eps$-close to a vertex of $\Delta$.
\end{proposition}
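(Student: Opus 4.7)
The plan is a compactness argument, exploiting that $M(\eps)$ is a closed and hence compact subset of the simplex $\Delta$. First I would verify the following pointwise statement: for every $\mathbf{a} \in M(\eps)$ there exists some integer $k \geq 0$ with $\norm{\VV^k(\mathbf{a}) - v}_{\infty} < \eps$ for some vertex $v$ of $\Delta$. There are three cases. For interior points this is immediate from the limit set property of the Stein-Ulam spiral recorded above: the orbit accumulates on all three vertices, so in particular it enters any open $\eps$-ball around them. For a vertex itself, take $k = 0$. For a non-vertex edge point, say $(x, 1-x, 0)$ with $x \in (0,1)$, a direct computation gives $\VV(x, 1-x, 0) = (x(2-x), (1-x)^2, 0)$, so the orbit remains on the edge and satisfies $1 - x_{k+1} = (1-x_k)^2$; hence $x_k \to 1$ (in fact quadratically) and eventually enters the $\eps$-ball around $(1,0,0)$.

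Next I would convert this pointwise statement into the uniform bound $D(\eps)$ via compactness. Define the sets
\[ U_k := \{\mathbf{a} \in M(\eps) : \norm{\VV^k(\mathbf{a}) - v}_{\infty} < \eps \text{ for some vertex } v \text{ of } \Delta\}. \]
Since $\VV^k$ is a polynomial map and being strictly within $\eps$ of some vertex is an open condition, each $U_k$ is relatively open in $M(\eps)$. The pointwise step gives $M(\eps) = \bigcup_{k \geq 0} U_k$, so by compactness we may extract a finite subcover $U_{k_1}, \ldots, U_{k_m}$. Taking $D(\eps) := \max_i k_i$, and for each $\mathbf{a} \in M(\eps)$ setting $f(\mathbf{a}) := k_i$ for any index $i$ with $\mathbf{a} \in U_{k_i}$, delivers the desired uniform bound.

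The hard part in the above is really the pointwise step, where the interior case relies on the Stein-Ulam limit set property cited from the earlier literature in the paper; the boundary case is then an elementary calculation. If one wished instead to give a self-contained argument avoiding the cited dynamical result, one could try to combine Propositions~\ref{99decr} and~\ref{easybounds} (first driving the potential $\phi$ geometrically close to zero in order to force the orbit near the boundary of $\Delta$, then using the near-edge dynamics to push it toward a vertex), but the case analysis required to rigorously bound the transition between being ``close to an edge'' and ``close to a vertex'' looks technical, which is why the compactness route is so much more economical.
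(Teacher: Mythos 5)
Your proof is correct, but it takes a genuinely different route from the paper. The paper's proof of Proposition~\ref{epsclosecompact} is an explicit trajectory analysis: it first uses Proposition~\ref{99decr} to drive the potential $\phi$ below $\eps^{12}$ in $C=\eps^{-15}$ steps, and then tracks the coordinates through several explicitly bounded phases ($N_1\leq 10\log\eps^{-1}$, $N_2-N_1\leq 100$, etc.) to show the orbit reaches an $\eps$-neighbourhood of a vertex, yielding a concrete bound $D\leq \eps^{-15}+O(\log\eps^{-1})$. Your argument instead establishes only the pointwise statement (via the cited limit-set property for interior points and the elementary edge dynamics $1-x_{k+1}=(1-x_k)^2$ for boundary points) and upgrades it to a uniform $D$ by covering the compact set $M(\eps)$ with the relatively open sets $U_k$ and extracting a finite subcover; all steps here check out, including the openness of $U_k$ (continuity of $\VV^k$ plus the strictness of the inequality) and the fact that $M(\eps)$ omits the central fixed point, so the limit-set property applies to every interior point of $M(\eps)$. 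What each approach buys: yours is shorter and avoids the somewhat delicate case analysis, at the cost of being non-effective --- it gives no bound whatsoever on $D(\eps)$, which matters for the paper's remark that a careful quantitative reading of the proofs yields the seven-fold iterated exponential relation between $\eps$ and $n$. Your reliance on the limit-set property is legitimate in context, since the paper itself invokes exactly that property (e.g.\ to define $d_x(\mathbf{a},\eps)$ and in the proof of Lemma~\ref{sixpoints}), so no new dependency is introduced.
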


\begin{proposition}\label{skipcorner} For every $0<\eps<0.1$ and integer $D\in\mathbb{N}$ there exists an $\eps'$ with $0<\eps'<\eps$ such that the following statement holds. If $\mathbf{a}\in\Delta$ is an arbitrary point that is $\eps'$-close to the $xy$ side but not $\eps$-close to the $x$ corner, and $d$ is positive integer with the property that $\VV^d(\mathbf{a})$ is $\eps'$-close to the $xz$ side, then $d\geq D$.
\end{proposition}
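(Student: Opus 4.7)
The plan is to quantify the intuition that, starting on the $xy$ side but bounded away from the $x$ corner, the orbit cannot reach the $xz$ side quickly: the $y$ coordinate can only shrink substantially once $x$ has become very close to $1$, and Proposition~\ref{easybounds} tells us that $1-x$ can only shrink quadratically per step. The key quantity to track is
$$u_t := 1 - x_t = y_t + z_t,$$
where $\mathbf{a}_t := \VV^t(\mathbf{a}) = (x_t, y_t, z_t)$.

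First I would apply Proposition~\ref{easybounds} in the form $(1-x_t)^2 \leq 1 - x_{t+1}$, i.e., $u_{t+1} \geq u_t^2$, and iterate to obtain the lower bound $u_d \geq u_0^{2^d}$. Next I would estimate $u_0$ from below and $u_d$ from above. Because $\mathbf{a}_0$ is not $\eps$-close to the $x$ corner, $\max(1 - x_0, y_0, z_0) > \eps$; combined with $z_0 \leq \eps' < \eps$ this forces $u_0 = y_0 + z_0 > \eps$. For the upper bound, Proposition~\ref{easybounds} also gives $z_{t+1} \leq 2z_t$, so $z_d \leq 2^d z_0 \leq 2^d \eps'$; together with $y_d \leq \eps'$ this yields $u_d \leq (1 + 2^d)\eps'$.

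Chaining the three estimates gives $\eps^{2^d} < u_0^{2^d} \leq u_d \leq (1 + 2^d)\eps'$, so necessarily $\eps' > \eps^{2^d}/(1 + 2^d)$. Since the right-hand side is decreasing in $d$ (as $0 < \eps < 1$), setting for instance $\eps' := \tfrac{1}{2}\,\eps^{2^{D-1}}/(1 + 2^{D-1})$ forces $d \geq D$; one checks separately that this choice also satisfies $\eps' < \eps$ in our range.

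The argument is fairly direct once one notices that Proposition~\ref{easybounds} already packages exactly the two ingredients needed: a quadratic lower bound on $1 - x$ and a linear upper bound on $z$. The only thing to track carefully is the upper bound for $u_d$: one must pick $\eps'$ small enough that the linear factor $2^d \leq 2^D$ cannot outrun the doubly-exponential smallness $\eps^{2^d}$, which is why $\eps'$ ends up doubly-exponentially small in $D$.
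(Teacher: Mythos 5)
Your proof is correct and uses essentially the same mechanism as the paper: an iterated quadratic recursion forcing a doubly-exponential choice of $\eps'$. The only cosmetic difference is that the paper squares the $y$ coordinate directly (via $y_{t+1}\geq y_t^2$, giving $y_d\geq(\eps/2)^{2^d}$ against $y_d\leq\eps'$), whereas you square $u_t=1-x_t$ via Proposition~\ref{easybounds} and therefore need the extra linear bound $z_d\leq 2^d\eps'$ to close the argument --- both are valid.
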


\begin{proposition}\label{skipcorner2} For every $0<\eps<0.1$ and integer $D\in\mathbb{N}$ there exists an $\eps'$ with $0<\eps'<\eps$ such that the following statement holds. If $\mathbf{a}\in\Delta$ is an arbitrary point that is $\eps'$-close to the $x$ corner of $\Delta$, then for all $i\in \{0,1,\ldots,D\}$, the point $\VV^i(\mathbf{a})$ is $\eps$-close to the $x$ corner of $\Delta$. 
\end{proposition}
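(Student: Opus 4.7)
The plan is to exploit Proposition~\ref{easybounds} directly: near the $x$-corner the coordinates $y$ and $z$ can at most double under a single application of $\VV$, so if they start small enough they will remain small for any prescribed number of iterations.

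First I would translate the $\infty$-distance to the $x$-corner into a single convenient quantity. For any $\mathbf{a}=(x,y,z)\in\Delta$ we have $x+y+z=1$, so
\[
\|\mathbf{a}-(1,0,0)\|_\infty = \max(1-x,\,y,\,z) = 1-x = y+z,
\]
since $y+z=1-x\geq\max(y,z)$. Thus being $\eps'$-close to the $x$-corner is equivalent to $y+z\leq\eps'$, and I only need to control $y^{(i)}+z^{(i)}$ where $\VV^i(\mathbf{a})=(x^{(i)},y^{(i)},z^{(i)})$.

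Next, from the second and third inequalities of Proposition~\ref{easybounds} (applied in the form $y'\leq 2y$, $z'\leq 2z$) I would immediately get
\[
y^{(i+1)}+z^{(i+1)} \leq 2\bigl(y^{(i)}+z^{(i)}\bigr),
\]
and iterating from $i=0$ to $i=D$ yields $y^{(i)}+z^{(i)}\leq 2^i(y+z)\leq 2^D\eps'$ for every $0\leq i\leq D$. Setting $\eps':=\eps/2^D$ (which is strictly smaller than $\eps$ since $D\geq 0$ and $\eps<0.1<1$), this bound becomes $\|\VV^i(\mathbf{a})-(1,0,0)\|_\infty \leq \eps$ for all $i\in\{0,1,\ldots,D\}$, which is exactly the conclusion of the proposition.

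I do not anticipate any real obstacle: the argument is a direct iteration of the trivial one-step coordinate doubling bound, and no finer feature of the Stein--Ulam dynamics is required. The only thing to keep in mind is that the equality $\|\mathbf{a}-(1,0,0)\|_\infty=y+z$ uses the simplex constraint $x+y+z=1$, so it is vital to check that all iterates remain in $\Delta$ — but $\VV$ maps $\Delta$ to itself, so this is automatic.
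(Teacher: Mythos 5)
Your proof is correct and follows essentially the same route as the paper's: both rest on the one-step doubling bound from Proposition~\ref{easybounds} and take $\eps'$ to be $\eps$ divided by a power of $2$ (the paper uses $\eps/2^{2D}$ and argues via the contrapositive, while you track $y+z$ directly and get the slightly sharper $\eps/2^{D}$). The only microscopic caveat is that $\eps'=\eps/2^{D}$ fails to be \emph{strictly} less than $\eps$ when $D=0$, but this is immaterial.
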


Recall that whenever $\mathbf{a}\in\rm{int}(\Delta)\setminus(1/3,1/3,1/3)$ the limit set of its orbit contains the three vertices of $\Delta$. For an $\eps>0$ define $d_x(\mathbf{a},\eps)$ (and $d_y(\mathbf{a},\eps),d_z(\mathbf{a},\eps)$) to be the least positive integer such that $\VV^{d_x(\mathbf{a},\eps)}(\mathbf{a})$ is $\eps$-close to the $x$ corner (and $y$ corner, $z$ corner resp.). The key ingredient to our proof of Theorem~\ref{theorem} is the following Lemma:

\begin{lemma}\label{sixpoints}
For all rational $\eps>0$ there exist six points $\mathbf{a},\mathbf{b},\mathbf{c},\mathbf{A},\mathbf{B},\mathbf{C}\in\rm{int}(\Delta)$ and an integer $d_0=d_0(\eps)$ such that the following two conditions hold:
\begin{itemize}
\item $\mathbf{a},\mathbf{b},\mathbf{c},\mathbf{A},\mathbf{B},\mathbf{C}$ all have rational coordinates and are $\eps$-close to the $x$ corner, 
\item for all $d\geq d_0$ at least one of the points $\VV^d(\mathbf{a}),\ldots,\VV^d(\mathbf{C})$ is $\eps$-close to the $x$-corner.
\end{itemize}
\end{lemma}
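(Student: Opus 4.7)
The plan is to choose a single rational base point $\mathbf{a}\in\rm{int}(\Delta)$ close to the $x$-corner and then construct the other five points by combining integer iterates of $\mathbf{a}$ with the cyclic symmetry $R(x,y,z)=(y,z,x)$, which commutes with $\VV$ and whose inverse sends the $x$-corner to the $y$- and $z$-corners (so $R$ and $R^2$ each map one of those corner neighbourhoods onto the $x$-corner neighbourhood). First, I would fix a sufficiently small rational $\eps'>0$ (dictated by Propositions~\ref{skipcorner2} and~\ref{epsclosecompact}) so that any $\eps'$-close point stays $\eps$-close to its vertex for a long time, while transits between corner regions have length at most $D(\eps)$. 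Pick a rational $\mathbf{a}$ that is $\eps'$-close to the $x$-corner; its orbit then cycles $x\to z\to y\to x$ through $\eps$-close visit intervals $[a_k,b_k]$ (to the $x$-corner, $a_1=0$), $[c_k,d_k]$ (to the $z$-corner) and $[e_k,f_k]$ (to the $y$-corner), separated by short transits.

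Next, define the six points by
$$\mathbf{a},\quad \mathbf{b}=R(\VV^{e_1}(\mathbf{a})),\quad \mathbf{c}=R^2(\VV^{c_1}(\mathbf{a})),\quad \mathbf{A}=\VV^{a_2}(\mathbf{a}),\quad \mathbf{B}=R(\VV^{e_2}(\mathbf{a})),\quad \mathbf{C}=R^2(\VV^{c_2}(\mathbf{a})).$$
Each of these has rational coordinates (since $\VV$ has integer-polynomial coordinate maps and $R$ is a coordinate permutation) and is $\eps$-close to the $x$-corner, because $R$ and $R^2$ map $\eps$-neighbourhoods of the $y$- and $z$-corners respectively onto the $\eps$-neighbourhood of the $x$-corner. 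Using the commutation $\VV\circ R=R\circ\VV$, the corresponding ``near-$x$-corner'' time sets are
$$X_\mathbf{a},\quad Y_\mathbf{a}-e_1,\quad Z_\mathbf{a}-c_1,\quad X_\mathbf{a}-a_2,\quad Y_\mathbf{a}-e_2,\quad Z_\mathbf{a}-c_2,$$
where $X_\mathbf{a},Y_\mathbf{a},Z_\mathbf{a}\subset\mathbb{Z}$ denote the times at which $\VV^n(\mathbf{a})$ is $\eps$-close to the $x$-, $y$-, $z$-corner respectively.

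Finally I would show that the union of these six sets contains $[d_0,\infty)$ for some $d_0=d_0(\eps)$. Within each cycle $k\geq k_0$, the six intervals contributed by the $k$-th corner visits, namely $[a_k-a_2,b_k-a_2]$, $[a_k,b_k]$, $[c_k-c_2,d_k-c_2]$, $[c_k-c_1,d_k-c_1]$, $[e_k-e_2,f_k-e_2]$, $[e_k-e_1,f_k-e_1]$, interleave (when sorted by left endpoint) into a single contiguous block, provided the visit lengths $L_k^x=b_k-a_k+1$, $L_k^z$, $L_k^y$ exceed the fixed shifts $a_2$, $c_2-c_1$, $e_2-e_1$ respectively. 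These inequalities hold for all $k\geq k_0$ because Proposition~\ref{99decr} forces $\phi$ to decay geometrically as the orbit traverses $M(\eps)$, driving $\VV^{a_k}(\mathbf{a})$ progressively closer to the $x$-corner, and Proposition~\ref{skipcorner2} then converts this closeness into unbounded visit lengths. Consecutive cycles glue together because the inter-cycle transit $a_{k+1}-f_k$ is bounded while the shift $a_2-e_1\geq L_1^y$ comfortably exceeds it. The main obstacle I foresee is the quantitative bookkeeping: making Propositions~\ref{99decr} and~\ref{skipcorner2} yield explicit lower bounds on $L_k^{x,y,z}$ and upper bounds on the transit lengths so that both the within-cycle interleaving and the between-cycle gluing hold uniformly for every $k\geq k_0$.
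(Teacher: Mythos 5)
Your construction of the six points is essentially the paper's: a rational base point $\mathbf{a}$ near the $x$-corner, two points obtained via the rotation $R$ (which commutes with $\VV$ and carries the $y$- and $z$-corner neighbourhoods to the $x$-corner neighbourhood) to cover the times when the orbit sits at the other two vertices, and three forward time-shifts of these to cover the transits. The difference is in how the covering of $[d_0,\infty)$ is established, and it is there that your sketch has genuine gaps. The paper does not argue via interleaving visit intervals; it runs a pointwise case analysis on the location of $\VV^d(\mathbf{a})$ using nested thresholds $\eps_2\ll\eps_1\ll\eps$: since $\phi(\VV^d(\mathbf{a}))<\eps_2^3$ the point is $\eps_2$-close to some side, and it is then either $\eps_1$-close to one of the three corners (handled by $\mathbf{a},\mathbf{b},\mathbf{c}$ via Proposition~\ref{skipcorner2}, which converts ``$\eps_1$-close now'' into ``$\eps$-close for the next $D_1$ steps'') or in transit along one of the three sides (handled by $\mathbf{A},\mathbf{B},\mathbf{C}$ via Propositions~\ref{epsclosecompact} and~\ref{skipcorner}).

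The first gap is your claim that the visit lengths $L_k^{x},L_k^{y},L_k^{z}$ eventually exceed the fixed shifts. You justify this by saying $\phi\to 0$ drives the entry points $\VV^{a_k}(\mathbf{a})$ progressively closer to the corner, after which Proposition~\ref{skipcorner2} gives long visits. But $\phi\to 0$ only forces the orbit toward the \emph{boundary} of $\Delta$, not toward the corners in the $\norm{\cdot}_\infty$ sense that Proposition~\ref{skipcorner2} requires: at the entry time $a_k$ one coordinate has just dropped below $\eps$ and may be comparable to $\eps$, so the entry point need not be $\eps'$-close to the corner for the much smaller $\eps'$ that the proposition demands. Establishing that each visit eventually reaches an $\eps'$-neighbourhood (and hence is long) requires an extra argument combining Propositions~\ref{epsclosecompact} and~\ref{skipcorner}; none of the four propositions gives it directly. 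The second gap is the choice of shifts: you take them to be the second-visit times $a_2,c_2,e_2$, but both the within-cycle interleaving and the between-cycle gluing require these fixed shifts to dominate the transit lengths, which are only known to be bounded by the constant $D(\eps')$ of Proposition~\ref{epsclosecompact} --- a constant that has no reason to be smaller than $a_2$, $c_2-c_1$, or $e_2-e_1$. The paper avoids exactly this by \emph{defining} the shifts for $\mathbf{A},\mathbf{B},\mathbf{C}$ as $D_2+d_x\left(\VV^{D_2}(\cdot),\eps\right)$ with $D_2=D(\eps_1)$, so that the shift is by construction at least the maximal transit time. Both gaps are repairable, but they are not mere bookkeeping: they are precisely the places where Propositions~\ref{skipcorner} and~\ref{skipcorner2} with their carefully nested parameters must be invoked.
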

\begin{proof}
Let $$\mathbf{a}=(\eps/2,\eps/2,1-\eps),\quad \mathbf{b}=\VV^{d_x(R(\mathbf{a}),\eps)}(R(\mathbf{a})),\quad \mathbf{c}=\VV^{d_x(R^2(\mathbf{a}),\eps)}(R^2(\mathbf{a})),$$
so that $\mathbf{a},\mathbf{b},\mathbf{c}$ are pairwise weakly rotated and all $\eps$-close to the $x$ corner. Let their pairwise rotational distances be $i_1,i_2,i_3$ and let $D_1=\max\{i_1,i_2,i_3\} = \max \{d_x(R(\mathbf{a}),\eps),d_x(R^2(\mathbf{a}),\eps) \}$.

Let $\eps_1>0$ be given by Proposition~\ref{skipcorner2} with parameters $\eps, D_1$. Let $D_2$ be given by Proposition~\ref{epsclosecompact} with parameter $\eps_1$. 
 Let $\mathbf{A},\mathbf{B},\mathbf{C}\in\Delta$ be defined as
$$\mathbf{A}=\VV^{D_2+d_x\left(\VV^{D_2}(\mathbf{a}),\eps\right)}(\mathbf{a}),$$
$$\mathbf{B}=\VV^{D_2+d_x\left(\VV^{D_2}(\mathbf{b}),\eps\right)}(\mathbf{b}),$$
$$\mathbf{C}=\VV^{D_2+d_x\left(\VV^{D_2}(\mathbf{c}),\eps\right)}(\mathbf{c}),$$
so that $\mathbf{A}\in\mathcal{O}(\mathbf{a}),\mathbf{B}\in\mathcal{O}(\mathbf{b})$ and $\mathbf{C}\in\mathcal{O}(\mathbf{c}).$ Observe that $\mathbf{a},\mathbf{b},\mathbf{c},\mathbf{A},\mathbf{B},\mathbf{C}$ are all $\eps$-close to the $x$ corner.
Let $D_3:=D_2 + d_x\left(\VV^{D_2}(\mathbf{a}),\eps\right) + d_x\left(\VV^{D_2}(\mathbf{b}),\eps\right) + d_x\left(\VV^{D_2}(\mathbf{c}),\eps\right)$. Finally, let $\eps_2$ be given by Proposition~\ref{skipcorner} with parameters $\eps_1,D_3$.

 Let $d_0>0$ be such that $\phi(\VV^{d_0}(\mathbf{a}))<\eps_2^3$ and observe that this implies that $\phi(\VV^{d_0}(\mathbf{b}))$, $\phi(\VV^{d_0}(\mathbf{c}))$, $\phi(\VV^{d_0}(\mathbf{A}))$, $\phi(\VV^{d_0}(\mathbf{B}))$, $\phi(\VV^{d_0}(\mathbf{C}))<\eps_2^3$ and hence all six points are $\eps_2$-close to some side of~$\Delta$. Note that $d_0$ can be taken to only depend on $\eps$ -- this can be seen both by Proposition~\ref{99decr}, and by the fact that all three points $\mathbf{a},\mathbf{b},\mathbf{c}$ and hence all parameters only depend on $\eps$. Now fix some $d\geq d_0$.

Recall that our goal is to show that one of the six points  $\VV^d(\mathbf{a}),\ldots,\VV^d(\mathbf{C})$ is $\eps$-close to the $x$-corner. We split into cases according to which side or corner of $\Delta$ the point $\VV^d(\mathbf{a})$ is close to. Assume first that $\VV^d(\mathbf{a})$ is $\eps_1$-close to the $y$ corner of $\Delta$. Note that $$\VV^d(\mathbf{b})=\VV^d\left(\VV^{d_x(R(\mathbf{a}),\eps)}(R(\mathbf{a}))\right)=R\left(\VV^{d_x(R(\mathbf{a}),\eps)}\left(\VV^d(\mathbf{a})\right)\right),$$  
and hence
$$R^2\left(\VV^d(\mathbf{b})\right)=\VV^{d_x(R(\mathbf{a}),\eps)}\left(\VV^d(\mathbf{a})\right).$$

As $d_x(R(\mathbf{a}),\eps)\leq D_1$, by our choice of $\eps_1$ this implies that $R^2\left(\VV^d(\mathbf{b})\right)$ is $\eps$-close to the $y$ corner of $\Delta$ and hence $\VV^d(\mathbf{b})$ is $\eps$-close to the $x$ corner of $\Delta$, as required. The case where $\VV^d(\mathbf{a})$ is $\eps_1$-close to the $z$ corner of $\Delta$ is very similar, with the conclusion being that then $\VV^d(\mathbf{c})$ is $\eps$-close to the $x$ corner of $\Delta$.

Now assume that $\VV^d(\mathbf{a})$ is $\eps_2$-close to the $xy$ side but is not $\eps_1$-close to any corner. Consider the location of $\VV^d(\mathbf{A})$ and note that $\VV^d(\mathbf{A})=\VV^{D_2+d_x\left(\VV^{D_2}(\mathbf{a}),\eps\right)}\left(\VV^d(\mathbf{a})\right)$. By the choice of $D_2$ we know that there exists some $i$ with $0<i\leq D_2$ such that $\VV^{d+i}(\mathbf{a})$ is $\eps_1$-close to the $x$ corner. By the choice of $\eps_2$ and $D_3$, we have that for all $j$ with $0\leq j \leq D_3$, the point $\VV^{d+j}(\mathbf{a})$ is not $\eps_2$-close to the $xz$ side. This implies that for all $j$ with $i\leq j \leq D_3$, the point $\VV^{d+j}(\mathbf{a})$ is $\eps_1$-close to the $x$ corner, and hence in particular $\VV^d(\mathbf{A})$ is $\eps$-close to the $x$ corner, as required. 

The other two cases, where we assume that $\VV^d(\mathbf{a})$ is $\eps_2$-close to the $yz$ side or the $xz$ side are very similar, with the conclusion being that then $\VV^d(\mathbf{B})$ or $\VV^d(\mathbf{C})$ is $\eps$-close to the $x$ corner. This finishes the proof that for all $d\geq d_0$, at least one of the six points is $\eps$-close to the $x$ corner.
\end{proof}

\section{Proof of the main result}

Instead of proving Theorem~\ref{theorem} as stated, we will in fact prove a slightly stronger statement. We will show that we cannot achieve linear guarantee even if we restrict ourselves to \emph{tripartite tournaments}. A tripartite tournament, for the purposes of the present paper, is a tournament $T$ whose vertex set $V(T)$ can be partitioned into three disjoint non-empty sets $A,B,C$ (where the ordering of the parts matters) such that every vertex in $A$ beats every vertex in $B$, every vertex in $B$ beats every vertex in $C$ and every vertex in $C$ beats every vertex in $A$. 

\begin{proposition}\label{propmain}
Let $E(n,\delta,d)$ be the event that there exists a tripartite tournament $T\in\Tourn(n)$ with vertex partition $A,B,C$ such that $|A|,|B|\leq \delta n$ and $\Theta^{\rand}_{d,n}(T)\in A$. Then for any $\delta>0$ there exist an integer $d_0>0$ and infinitely many $n$ with the property that for all $d\geq d_0$ we have 
$$\mathbb{P}\left(E(n,\delta,d)\right) > 1-\delta.$$
\end{proposition}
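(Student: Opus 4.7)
The plan is to combine two ingredients: the dynamical reduction established in the previous section between the $d$-RPT on a tripartite tournament and the Volterra QSO $\VV$, and Lemma~\ref{sixpoints} together with the cyclic symmetry $\VV\circ R = R\circ\VV$. The key bridge is that for a tripartite tournament $T$ on $n$ vertices with parts $A,B,C$ arranged in the cycle $A\to B\to C\to A$, if we choose the within-part tournaments to be \emph{regular} (which is possible whenever each of $|A|,|B|,|C|$ is odd), then the label distribution at each level of the RPT is constant within each part. The aggregated vector $(|A|/n,|B|/n,|C|/n)\in\Delta$ evolves exactly under $\VV$ as we move from the leaves towards the root, so that $\mathbb{P}(\Theta^{\rand}_{d,n}(T)\in A)$ equals the first coordinate of $\VV^{d-1}\bigl((|A|/n,|B|/n,|C|/n)\bigr)$. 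Verifying this identity (using the formula displayed in the introduction, together with the fact that within a regular subtournament on part $A$, the sums over in-neighbours and out-neighbours inside $A$ cancel pairwise) is the first step.

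Given $\delta>0$, I set $\eps=\eps(\delta)$ much smaller than $\delta$ and apply Lemma~\ref{sixpoints} to obtain six rational points $\mathbf{p}_1,\ldots,\mathbf{p}_6\in\rm{int}(\Delta)$ and a threshold $d_0$. Each $\mathbf{p}_i$ is $\eps$-close to a vertex of $\Delta$, so two of its three coordinates are at most $\eps$. Using the rotational symmetry $\VV\circ R=R\circ\VV$ of $\VV$, the lemma's conclusion (that at least one of the iterates is $\eps$-close to the $x$-corner) may be transported to any distinguished corner of $\Delta$. Let $N$ be the common denominator of the rational coordinates of $\mathbf{p}_1,\ldots,\mathbf{p}_6$. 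For any positive multiple $n$ of $N$ chosen so that parity conditions ensure all six triples of part sizes consist of odd integers (achievable on an arithmetic progression of admissible $n$), I construct six tripartite tournaments $T_1,\ldots,T_6$ whose initial aggregated distribution is a cyclic permutation of $\mathbf{p}_i$, with regular within-part subtournaments. The cyclic labeling of the parts of $T_i$ as $(A_i,B_i,C_i)$ is chosen (separately for each $i$) so that $A_i$ and $B_i$ are two consecutive small parts in the cycle; since $\mathbf{p}_i$ is close to a corner of $\Delta$, this is possible and gives $|A_i|,|B_i|\leq \eps n\leq \delta n$.

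The dynamical reduction and the (cyclically-rotated) Lemma~\ref{sixpoints} together guarantee that for each $d\geq d_0$ there is some $i\in\{1,\ldots,6\}$ such that the relevant coordinate of $\VV^{d-1}$ applied to the initial distribution of $T_i$ (namely the coordinate corresponding to the part labeled $A_i$) is at least $1-O(\eps)$. Hence $\mathbb{P}(\Theta^{\rand}_{d,n}(T_i)\in A_i)\geq 1-O(\eps)>1-\delta$ for that $i$, and since
\[
E(n,\delta,d)\supseteq \bigcup_{i=1}^6 \bigl\{\Theta^{\rand}_{d,n}(T_i)\in A_i\bigr\},
\]
we conclude $\mathbb{P}(E(n,\delta,d))>1-\delta$ for every $d\geq d_0$, as required.

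The main obstacles I foresee are the bookkeeping, not any new dynamical input. The first obstacle is ensuring the dynamics reduce \emph{exactly} to $\VV$: this requires regular within-part tournaments, hence odd part sizes, so $n$ must be chosen from a specific arithmetic progression determined by $N$ and the coordinates of the $\mathbf{p}_i$'s. The second, more substantive obstacle is aligning the cyclic labeling of each $T_i$ with the conclusion of Lemma~\ref{sixpoints}: for each $i$ the lemma must be applied with the ``distinguished corner'' identified with the coordinate corresponding to $A_i$, which is a different cyclic shift of the ambient simplex coordinates depending on which corner $\mathbf{p}_i$ is close to. This alignment is handled uniformly by the symmetry $\VV\circ R = R\circ \VV$, but verifying that the same six points serve simultaneously for all six required alignments is where the technical care of the argument concentrates.
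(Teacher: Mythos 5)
Your overall strategy is the same as the paper's: push the six points of Lemma~\ref{sixpoints} through the tripartite-tournament interpretation of $\VV$, take $n$ to clear all denominators, and apply a union over the six tournaments. One simplification: the detour through regular within-part tournaments (hence odd part sizes and a parity-constrained arithmetic progression of $n$) is not needed. For any tripartite tournament and any $i\in A$, the within-part terms in the update contribute $\sum_{i\to j,\,i,j\in A}P_iP_j-\sum_{j\to i,\,i,j\in A}P_iP_j=0$ to $\sum_{i\in A}P_i(k-1)$, so the aggregated vector $(P(A),P(B),P(C))$ evolves exactly under $\VV$ regardless of the internal structure of the parts. The paper simply takes $n$ to be any common multiple of the $18$ denominators.

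The alignment step, however, is a genuine gap as written, and you have correctly located it without closing it. The identity $\VV\circ R=R\circ\VV$ conjugates the entire orbit: it shifts the corner near which the initial point sits and the corner near which its iterates land by the same cyclic amount. It therefore cannot be used to arrange independently that (i) the two small coordinates of the initial point occupy the $A$ and $B$ slots and (ii) the corner distinguished by the conclusion of Lemma~\ref{sixpoints} is the $A$ slot. Concretely, if you rotate the six points by possibly different amounts $k_i$, then for a given $d$ the lemma only tells you that $\VV^{d}(\mathbf{p}_{i^*})$ is near the $x$ corner for \emph{some} $i^*$ you do not get to choose; the winner of your $T_{i^*}$ then concentrates on the part sitting in coordinate $1+k_{i^*}$ of that tournament's labelling, which is $A_{i^*}$ only if $k_{i^*}=0$. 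So all six tournaments must use one and the same labelling, which forces all six points to lie near the same corner, with that corner offset from the lemma's target corner by exactly one step of the cycle $A\to B\to C\to A$ (initial points near the $z$ corner, iterates near the $x$ corner). This is precisely what Lemma~\ref{sixpoints} is engineered to supply, though you should note that its first bullet (``all $\eps$-close to the $x$ corner'') is not consistent with its own proof, which starts from $(\eps/2,\eps/2,1-\eps)$; you must fix one convention and check the lemma in that convention before the union bound you write down completes the proof as in the paper.
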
 

Theorem~\ref{theorem} follows easily from Proposition~\ref{propmain}.
\begin{proof}[Proof of Theorem~\ref{theorem}]
Let $\delta = \eps/2$ and let $T\in\Tourn(n)$ be a tripartite tournament  with vertex partition $A,B,C$ such that $|A|,|B|\leq \delta n$. Note that as every vertex in $C$ beats every vertex in $A$, every vertex in $A$ has out-degree at most $2\delta n$. Hence if $\Theta$ is a voting tree with $\Theta(T)\in A$ then in particular we have that $\guar(\Theta)\leq 2\delta n=\eps n$. By Proposition~\ref{propmain} there exist infinitely many $n$ with the property that $$\liminf_{d\rightarrow\infty}\mathbb{P}\left(E(n,\delta,d)\right)\geq 1-\delta,$$ and hence $$\liminf_{d\rightarrow\infty}\mathbb{P}\left(\guar\left(\Theta^{\rand}_{d,n}\right)\leq \eps n\right)\geq 1-\eps.$$
\end{proof}

Now we are ready to prove the main proposition.
\begin{proof}[Proof of Proposition~\ref{propmain}]
Fix an arbitrarily small rational $\eps>0$. Let $a_1,\ldots,a_6$ be the six points given by Lemma~\ref{sixpoints}. Let $n$ be a common multiple of all $18$ denominators of their coordinates. Given a tripartite tournament $T$, the distribution at layer $d$ of a $d$-RPT on input $T$ corresponds to the point $(|A|/n,|B|/,|C|/n)$ in $\Delta$. In this sense, the six points $a_1,\ldots,a_6$ correspond to $n$-vertex tripartite tournaments of the form $A\rightarrow B\rightarrow C\rightarrow A$ with $|A|,|B|\leq \eps n$. By Lemma~\ref{sixpoints} there exists a $d_0$ such that for all $d>d_0$, the probability that the winner in one of these tournaments is in $A$ is at least $1-\eps$. Hence with probability $1-\eps$ the guarantee of the random voting tree is less than $2\eps n$. 
\end{proof}

\section{Proofs of Propositions~\ref{99decr}, \ref{epsclosecompact}, \ref{skipcorner} and~\ref{skipcorner2}}

Some of the following proofs are implicitly present in~\cite{misi}. To make the paper self-contained we give full proofs below. 

\begin{proof}[Proof of Proposition~\ref{99decr}]
It suffices to prove the inequality
$$(1+y-z)(1+z-x)(1+x-y)\leq 1-\eps^3.$$
Let $a=1+y-z$, $b=1+z-x$ and $c=1+x-y$, and since $(x,y,z)\in M(\eps)$ we can assume that $a\geq 1+\frac{\eps}{2}$,~ $b\leq 1$ and $c\leq a$. It follows from a strengthening of the AMGM inequality (see e.g.~\cite{amgm}) that
\begin{equation*}
\begin{split}
\frac{a+b+c}{3}-\sqrt[3]{abc}&\geq\frac{1}{3}\left(\sqrt{a}-\frac{1}{3}\left(\sqrt{a}+\sqrt{b}+\sqrt{c}\right)\right)^2\\
&\geq \frac{1}{3}\left(\frac{1}{3}\sqrt{a}-\frac{1}{3}\sqrt{b}\right)^2\geq \frac{1}{27}\left(\frac{\eps}{5}\right)^2,
\end{split}
\end{equation*}
and the result follows.
\end{proof}

\begin{proof}[Proof of Proposition~\ref{epsclosecompact}]
We will assume $\eps < 2^{-100}$. Let $C:=\eps^{-15}$. By Proposition~\ref{99decr} we have $$\phi\left(\VV^{C}(a)\right)\leq \eps^{12}.$$ Let $b:=\VV^C(a)$ and suppose that $b=(x,y,z)$ is not $\eps$-close to any vertex. Then it has a coordinate, wlog $x$ with $\eps<x<1-\eps$. Since $\phi(b)<\eps^{12}$ we have either $y\leq\eps^4$ or $z\leq\eps^4$. We will assume $z<\eps^4$, the case of $y<\eps^4$ is very similar.

Apply $\VV$ repeatedly to obtain $\VV^i(b)=(x_i,y_i,z_i)$ for $i=0,1,2,\ldots$. Let $N_1\geq 0$ be the least integer with $x_i>0.1$. 
 Then for all $i\in\{0,1,\ldots,N_1-1\}$ we have $z_{i+1}= z_i^2+2z_ix_i\leq (0.2+z_i)z_i$ and since initially $z_0<0.1$ it follows that $z_i^2<z_{i+1}<z_i/2$. Since $y_i+z_i+x_i=1$ for all $i$, we have  for all $i\leq N_1-1$ that $y_i\geq 0.8$. So for $i\leq N_1-1$ we have $x_{i+1}=x_i^2+2x_iy_i\geq 1.5 x_i$. Since $x_i\leq 1$ for all $i$, we have $N_1\leq 10\log \eps^{-1}$. Note that we have
$$0.1<x_{N_1}<0.2, \quad\text{and}\quad z_{N_1}\leq z_0<\eps^4.$$

Let $N_2$ be the least integer larger than $N_1$ such that $x_{N_2}>0.9$. We claim $N_2\leq N_1+100$. First observe that for all $i\in\mathbb{N}$ we have $z_{i+1}\leq 2z_i$ and hence for all $i\in[N_1,N_1+100]$ we have $z_i\leq \eps^2$. If for some $i$ we have $x_i\leq 0.9$, $z_i\leq \eps^2$ then $y_i\geq 0.09$ and hence $x_{i+1}=x_i(1+y_i-z_1)\geq x_i\cdot 1.05$. Since $x_{N_1}>0.1$ and $0.1\cdot 1.05^{100}>0.9$ this proves that $N_2-N_1\leq 100$.

 Now we have
$$0.9<x_{N_2}<0.995, \quad z_{N_2}\leq \eps^2,\quad\text{and so}\quad 0.004<y_{N_2}<0.1.$$

Let $N_3$ be the least integer larger than $N_2$ such that $x_{N_3}\geq 1-\eps$, and let $N'$ be the least integer larger than $N_2$ such that $z_{N'}\geq \eps/2$. We will first show that $N_3\leq N'$. Note that when $i\in [N_2,N'-1]$ we have $z_{i}<\eps/2$ and so $y_{i+1}\leq y_{i}/2$. As for any $i$ we have $z_{i+1}\leq 2z_i$ we know that $\eps/4\leq z_{N'-1}\leq \eps/2$. Since $\phi\left(\left(x_{N'-1},y_{N'-1},z_{N'-1}\right)\right)\leq \phi\left(\left(x_0,y_0,z_0\right)\right)\leq \eps^8$, we conclude that $x_{N'-1}y_{N'-1}\leq \eps^6$. Now assume for contradiction that $N'\leq N_3$. Then for all $i\in[N_2,N'-1]$ we have $y_i>\eps/2$ (as $N'\leq N_3$). Since for $i\in [N_2,N'-1]$ the $y$ coordinate is decreasing we conclude $x_{N'-1}=1-z_{N'-1}-y_{N'-1}\geq 1-\eps-0.1\geq 0.8$. Now the inequality $x_{N'-1}y_{N'-1}\leq \eps^6$ implies $y_{N'-1}\leq \eps^5$, a contradiction. Hence we have shown that $N_3\leq N'$.

This implies that for $i\in[N_2,N_3-1]$ we have $y_i\leq 0.1$ and $z_i\leq \eps/2$ and so since $y_{i+1}=\leq 0.5y_i$ in this range, we have  $$N_3\leq N_2+ 10\log \eps^{-1}.$$ Setting $D=C+N_3$ finishes the proof. 
\end{proof}

\begin{proof}[Proof of Proposition~\ref{skipcorner}]
Given $0<\eps<0.1$ and $D\in\mathbb{N}$, we claim that $$\eps_1:=\left(\frac{\eps}{2}\right)^{2^{D}}$$ satisfies the statement of Proposition~\ref{skipcorner}. Indeed, fix an $\mathbf{a}=(x_0,y_0,z_0)\in\Delta$ that is $\eps_1$-close to the $xy$ side but not $\eps$-close to the $x$ corner. Denote the coordinates of $\VV^i(\mathbf{a})$ as $(x_i,y_i,z_i)$. Then $x_0\leq 1-\eps$ and $z_0\leq \eps^{2^{D}}$, implying that $y_0\geq \eps/2$.  The inequality $d\geq D$ follows from our choice of $\eps_1$ and the fact that for any $i$ we have $y_{i+1}\geq y_i^2$. Indeed, as $y_0\geq \eps/2$ this shows that for any $i$, we have $y_i\geq (\eps/2)^{2^i}$, and hence $y_i\leq \eps_1$ implies $i\geq D$.
\end{proof}

\begin{proof}[Proof of Proposition~\ref{skipcorner2}]
Given $0<\eps<0.1$ and $D\in\mathbb{N}$, we claim that $$\eps_1:=\frac{\eps}{2^{2D}}$$ satisfies the claim.  Indeed, fix an $\mathbf{a}=(x_0,y_0,z_0)\in\Delta$ that is $\eps_1$-close to the $x$ corner of $\Delta$ and denote the coordinates of $\VV^i(\mathbf{a})$ as $(x_i,y_i,z_i)$ as before. If $d\in\mathbb{N}$ is such that $V^d(\mathbf{a})$ is not $\eps$-close to the $x$ corner then $z_{d}\geq \eps/2$ or $y_d\geq \eps/2$. Initally we had $y_0,z_0\leq \eps_1=\eps 2^{-2D}$ and each coordinate at most doubles in every step, implying $d\geq D$ as required.
\end{proof}

\section*{Acknowledgements}

We are very grateful to Shagnik Das, Christopher Kusch and Tam\'as M\'esz\'aros for many helpful discussions.

\end{document}